\newtheorem{lemma}{Lemma} [section]
\newtheorem{thm}[lemma]{Theorem}
\newtheorem{cor}[lemma]{Corollary}
\newtheorem{prop}[lemma]{Proposition}
\newtheorem{definition}[lemma]{Definition}
\newtheorem{claim}[lemma]{Claim}
\newtheorem*{case1}{Case 1}
\newtheorem*{case2}{Case 2}
\newtheorem*{case3}{Case 3}
\newtheorem*{case4}{Case 4}
\newtheorem*{case4a}{Case 4a}
\newtheorem*{case4b}{Case 4b}
\theoremstyle{remark}
\newtheorem*{remark}{Remark}
\theoremstyle{definition}
\numberwithin{equation}{section}
\DeclareMathOperator{\Aut}{{Aut}}
\DeclareMathOperator{\cyc}{{cycles}}
\DeclareMathOperator{\Core}{{core}}
\DeclareMathOperator{\Sym}{{Sym}}
\DeclareMathOperator{\lcm}{{lcm}}
\DeclareMathOperator{\charp}{{char}}
\newcommand{\F}{{\mathbb F}}
\newcommand{\C}{{\mathbb C}}
\newcommand{\cC}{{\mathcal C}}
\newcommand{\cI}{{\mathcal I}}
\newcommand{\Line}{{\mathbb P}^1}
\newcommand{\meet}{\wedge}
\newcommand{\join}{\vee}
\begin{document}



\title[Analogues of the Jordan--H\"older theorem]
{Analogues of the Jordan--H\"older theorem for transitive $G$-sets}

\author{Greg Kuperberg}
\address{Department of Mathematics,
  University of California,
  Davis, CA 95616
}
\email{greg@math.ucdavis.edu}
\urladdr{www.math.ucdavis.edu/$\sim$greg/}

\author{Michael E. Zieve}
\address{
  Center for Communications Research,
  805 Bunn Drive,
  Princeton, NJ 08540
}
\email{zieve@math.rutgers.edu}
\urladdr{www.math.rutgers.edu/$\sim$zieve/}
\thanks{The authors thank Richard Lyons and Peter Neumann for
valuable correspondence.  The first author was supported by the
National Science Foundation under Grant No. 0606795}


\begin{abstract}
Let $G$ be a transitive group of permutations of a finite set
$\Omega$, and suppose that some element of $G$ has at most two
orbits on $\Omega$.  We prove that any two maximal chains of
groups between $G$ and a point-stabilizer of $G$ have the same
length, and the same sequence of relative indices between
consecutive groups (up to permutation).  We also deduce the same
conclusion when $G$ has a transitive quasi-Hamiltonian subgroup.
\end{abstract}


\maketitle


\section{Introduction}

One of the few mistakes in Jordan's classic 
\emph{Trait\'e des substitutions} is an assertion that would
now be called the Jordan--H\"older theorem for transitive $G$-sets.
Specifically, he asserted in \cite[\S 51, p.~38]{J}
that, for every subgroup $H$ of a finite group $G$, the pair
$(G,H)$ has the following property:

\begin{definition}\label{Jordan}
Let $H$ be a finite-index subgroup of a group $G$.
We say the pair $(G,H)$ has the \emph{Jordan property} when the
following holds: if $A_1 \subsetneqq\dots\subsetneqq A_a$ and
$B_1\subsetneqq\dots\subsetneqq B_b$ are maximal chains of groups
between $H$ and $G$, then $a=b$ and the sequence
$([A_2:A_1],\dots,[A_a:A_{a-1}])$ is a permutation of
$([B_2:B_1],\dots,[B_b:B_{b-1}])$.
\end{definition}

Jordan realized his mistake soon after publication, and retracted
his assertion \cite{J2}; the smallest counterexample is $(A_4,1)$,
in which $1<C_2<V_4<A_4$ and $1<C_3<A_4$ are maximal chains having
distinct lengths.  A half-century later, Ritt discovered that
$(G,H)$ has the Jordan property if $G=HI$ for some finite cyclic
subgroup $I$ of $G$; this was a key ingredient in Ritt's work on
functional equations, yielding a fundamental invariant of
functional decompositions of a complex polynomial.  After another
sixty years, M\"uller showed that $(G,H)$ has the Jordan property
if $G=HI$ for some finite abelian subgroup $I$ of $G$.  We will
give a simpler proof of M\"uller's result, while also extending
it to a larger class of groups:

\begin{prop}\label{abelianintro}
If $H$ is a subgroup of a group $G$, and $G=HI$
for some finite subgroup $I$ of $G$ such that
\begin{equation}\tag{$*$}
\text{any two subgroups $I_1,I_2$ of $I$ are permutable (i.e.,
 $I_1I_2=I_2I_1$),}
\end{equation}
then $(G,H)$ has the Jordan property.
\end{prop}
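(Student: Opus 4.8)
The plan is to transfer the problem from the interval $[H,G]$ in the subgroup lattice of $G$ to the interval $[D,I]$ with $D:=H\cap I$, and then to run the classical Jordan--H\"older/Zassenhaus argument, with the quasi-Hamiltonian hypothesis $(*)$ supplying the permutability needed at the one nontrivial step. First note that $G=HI$ forces $G=IH$ and $[G:H]=[I:D]<\infty$, so all intervals are finite and induction on $[G:H]$ is available. Since $H\le K\le G=HI$, Dedekind's modular law gives $K=H(K\cap I)$ for every $K\in[H,G]$; hence $\phi\colon K\mapsto K\cap I$ is an injection of $[H,G]$ into $[D,I]$. I would record two properties of $\phi$. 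It is \emph{index-preserving}: for $H\le K_1\le K_2\le G$ the map $i\mapsto K_1 i$ identifies $(K_1\cap I)\backslash(K_2\cap I)$ with $K_1\backslash K_2$, so $[K_2:K_1]=[K_2\cap I:K_1\cap I]$. And it \emph{preserves meets and joins}: identifying $[H,G]$ and $[D,I]$ with the lattices of $G$- and of $I$-invariant partitions of the coset space $H\backslash G$ (on which $I$ acts transitively with point-stabilizer $D$), the lattice operations are the group-independent partition meet and join, and under this identification $\phi$ is simply the inclusion of the $G$-invariant partitions into the $I$-invariant ones.

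The key step is to deduce from $(*)$ that \emph{any two members $A,B$ of $[H,G]$ permute}. Writing $J=A\cap I$ and $J'=B\cap I$, hypothesis $(*)$ makes $JJ'=\langle J,J'\rangle$ a subgroup of $I$. Using $A=HJ$, $AH=A$, and the join-preservation of $\phi$,
\[
 AB=HJ\cdot HJ'=HJJ'=H\langle J,J'\rangle=H\bigl(\langle A,B\rangle\cap I\bigr)=\langle A,B\rangle,
\]
the last equality by $K=H(K\cap I)$ applied to $K=\langle A,B\rangle$. Thus $AB$ is a subgroup, so $A$ and $B$ are permutable in $G$; consequently the diamond (second isomorphism) theorem applies to every pair in $[H,G]$, yielding for each such pair the index-preserving lattice isomorphisms $[A\cap B,A]\cong[B,AB]$ and $[A\cap B,B]\cong[A,AB]$.

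With permutability in hand I would finish by induction on $[G:H]$. Take two maximal chains in $[H,G]$ with top maximal subgroups $A'$ and $B'$ (each covered by $G$). If $A'=B'$, apply the inductive hypothesis to $[H,A']$ and append the common top index $[G:A']$. If $A'\neq B'$, set $C=A'\cap B'$; since $A'B'$ is a subgroup properly containing the maximal subgroups $A'$ and $B'$ we have $A'B'=G$, and the diamond isomorphisms above show that $C$ is covered by both $A'$ and $B'$, with $[A':C]=[G:B']$ and $[B':C]=[G:A']$. Choosing any maximal chain from $H$ to $C$ produces two chains $H\!\to\! C\!\to\! A'\!\to\! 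G$ and $H\!\to\! C\!\to\! B'\!\to\! G$ whose index multisets agree (both equal the $H$-to-$C$ multiset together with $\{[G:A'],[G:B']\}$), while the inductive hypothesis on $[H,A']$ and on $[H,B']$ matches each given chain with the corresponding chain through $C$. This yields simultaneously $a=b$ and the equality of index multisets. The main obstacle is the permutability step: since $H\not\le I$, condition $(*)$ does not directly control products of members of $[H,G]$, and the entire argument hinges on transporting permutability out of $I$ via the join-preservation of $\phi$.
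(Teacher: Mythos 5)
Your argument is correct and follows essentially the same route as the paper: the transfer $K\mapsto K\cap I$ (the paper's Lemma~\ref{tran}), the use of $(*)$ to turn joins of intermediate subgroups into products, and Netto's inductive reduction to the diamond configuration (the paper's Lemma~\ref{dl}). The only real difference is packaging: you derive maximality of $A'\cap B'$ in $A'$ and $B'$ from a lattice-isomorphism form of the second isomorphism theorem --- which is valid here only because you first proved that \emph{all} pairs in $[H,G]$ permute, not merely $A'$ and $B'$ --- whereas the paper argues directly that a subgroup strictly between $A'\cap B'$ and $A'$ would yield one strictly between $B'$ and $G$.
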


Note that $I_1I_2=I_2I_1$ if and only if $I_1I_2$ is a group, or
equivalently $\#\langle I_1,I_2\rangle=(\#I_1)[I_2:I_1\cap I_2]$.
Abelian groups $I$ satisfy $(*)$, as do Hamiltonian groups (i.e.,
nonabelian groups with no nonnormal subgroups).
As shown by Dedekind \cite{D}, the finite Hamiltonian groups
consist of the direct products of the order-$8$ quaternion group
with an abelian group containing no elements of order $4$.  There
is a similar but less known classification of finite groups
satisfying $(*)$ (called quasi-Hamiltonian groups), due to Pic
\cite{Pic}.

Our main result says that $(G,H)$ has the Jordan property if
some cyclic subgroup $I$ of $G$ has two orbits on the set of
cosets of $H$ in $G$:

\begin{thm}\label{mainthm}
Let $H$ be a subgroup of a group $G$, and let $I$
be a finite cyclic subgroup of $G$ such that $G=IH\cup IgH$ for
some $g\in G$.  Then $(G,H)$ has the Jordan property.
\end{thm}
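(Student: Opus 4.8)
The plan is to pass to the permutation action of $G$ on the coset space $\Omega=G/H$. Under this action the subgroups $A$ with $H\le A\le G$ correspond to the systems of imprimitivity (block systems) $\mathcal B_A$ of $G$ on $\Omega$, the blocks of $\mathcal B_A$ being the $A$-orbits; the relative index $[A':A]$ becomes the number of $\mathcal B_A$-blocks inside each $\mathcal B_{A'}$-block, and a maximal chain between $H$ and $G$ becomes a maximal chain of block systems. The point of the hypothesis is that a generator $\sigma$ of $I$ has at most two orbits on $\Omega$; since every block system is $G$-invariant, hence $\sigma$-invariant, $\sigma$ permutes the blocks of each $\mathcal B_A$, and the number $c(A)$ of $\sigma$-orbits on $\mathcal B_A$ satisfies $c(A)\in\{1,2\}$ and is non-increasing as $A$ grows. (If $\sigma$ has a single orbit the pair falls under Proposition~\ref{abelianintro}, so the real content is the two-orbit case, where $c(H)=2$ and $c(G)=1$.)

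This splits $[H,G]$ into the order ideal $\mathcal D_2=\{A:c(A)=2\}$ and the filter $\mathcal D_1=\{A:c(A)=1\}$, so every maximal chain runs up through $\mathcal D_2$ and crosses exactly once into $\mathcal D_1$. For $A\in\mathcal D_1$ we have $G=IA$, so Proposition~\ref{abelianintro} applies to the pair $(G,A)$: concretely $\sigma$ is a single cycle on $\mathcal B_A$, the intermediate block systems are the cosets of the cyclic subgroups, and $[A,G]$ is a sublattice of the divisor lattice of $[G:A]$, on which the equal-length, equal-multiset conclusion is immediate. For $A\in\mathcal D_2$ the quotient map $\Omega\to\mathcal B_A$ carries the two $\sigma$-orbits $\Omega_1,\Omega_2$ of $\Omega$ onto the two $\sigma$-orbits of $\mathcal B_A$, which must therefore be disjoint, so no block of $\mathcal B_A$ meets both $\Omega_1$ and $\Omega_2$. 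Thus $\mathcal B_A$ restricts to a $\sigma$-invariant partition of each $\Omega_i$ into blocks of the common size $d=[A:H]$; as in the single-cycle case these restrictions are unique once $d$ is fixed, so $A\mapsto d$ is injective, $d\mid\gcd(|\Omega_1|,|\Omega_2|)$, and $\mathcal D_2$ is again a distributive divisor-sublattice.

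What remains, and what I expect to be the heart of the matter, is to control the crossing edges $A\lessdot A'$ with $A\in\mathcal D_2$ and $A'\in\mathcal D_1$, and then to glue the two regimes. Writing $\alpha=|\Omega_1|$, $\beta=|\Omega_2|$, $d=[A:H]$ and $m=[G:A']$, transitivity of $\sigma$ on $\mathcal B_{A'}$ forces each $\mathcal B_{A'}$-block to contain the same number of $\mathcal B_A$-blocks from each of $\Omega_1,\Omega_2$; counting these gives $\alpha/(dm)$ and $\beta/(dm)$ respectively, whence $dm\mid\gcd(\alpha,\beta)$ and the crossing index equals $(\alpha+\beta)/(dm)$. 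The difficulty is that distinct maximal chains may cross at different block sizes $d$ and different values of $m$, and that above one $\mathcal D_2$-element several non-comparable $\mathcal D_1$-elements can realize the same $m$ (the distinct phase alignments of the two cycles), so $[H,G]$ need not be distributive; indeed it need not even give the multiset equality by a local perspectivity argument, as the example $(S_3,1)$ already shows.

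I would therefore establish the result by an exchange argument in the spirit of the Zassenhaus–Schreier proof: induct on $[G:H]$, and for any two atoms $A,A'$ of $[H,G]$ compare maximal chains through each inside $B=\langle A,A'\rangle$, using the explicit cyclic two-orbit structure of $\sigma$ on $G/H$ to show the two resulting index multisets coincide, after which Proposition~\ref{abelianintro} (applied once the chain has climbed into $\mathcal D_1$) completes the chain inside $[B,G]$. Proving this local exchange, equivalently that the numerical data $(\alpha,\beta)$ together with the constraint of $G$-invariance force every crossing to contribute precisely the complementary index, is the main obstacle, and is exactly where one must use that $I$ is \emph{cyclic} and not merely that it has two orbits.
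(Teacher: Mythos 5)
Your reduction is sound as far as it goes, and it runs parallel to the paper's own first step: the paper also inducts on $[G:H]$ via a Netto-style diamond lemma (Lemma~\ref{dl}), which reduces everything to the single ``local exchange'' you identify, namely two distinct maximal subgroups $A,B$ of $G$ with $A\cap B=H$. Your observations about $\mathcal D_1$, $\mathcal D_2$, the injectivity of $A\mapsto [A:H]$ on $\mathcal D_2$, and the crossing index $(\alpha+\beta)/(dm)$ are all correct. But the proof stops exactly at the point you flag as ``the main obstacle'': you never prove the local exchange, and that exchange is the entire content of the theorem. What you have written is a framework plus an accurate description of why the naive lattice-theoretic arguments fail; it is not a proof.

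The paper fills this gap with Theorem~\ref{diamond}: in the diamond situation either (i) $G=AB$ and $H$ is maximal in both $A$ and $B$, so the chains $G>A>H$ and $G>B>H$ have index multisets $\{[G:A],[B:H]\}$ and $\{[G:B],[A:H]\}$ which coincide because $[G:A]=[B:H]$ and $[G:B]=[A:H]$; or (ii) $G/\Core_G(H)$ is dihedral of order $2r$ with $r$ prime and both chains have indices $(r,2)$ --- this is precisely your $(S_3,1)$ phenomenon, and it must be isolated as an exceptional case rather than absorbed into a perspectivity argument. The proof of this dichotomy is where cyclicity is actually used, and the two tools you are missing are: (a) the quantitative cycle lemma (Lemma~\ref{0}): if the images of $\omega$ lie in $g$-cycles of lengths $a$ and $b$ on $\Omega/\phi$ and $\Omega/\psi$, then $\omega$ lies in a $g$-cycle of length $\lcm(a,b)$, and the auxiliary congruence $\phi^\psi$ (two $\phi$-classes identified when they meet the same $\psi$-classes) is a \emph{nontrivial coarsening} of $\phi$ unless $a=b$ or $\gcd(a,b)=1$ --- primitivity of $\Omega/\phi$ then forces severe divisibility constraints; and (b) in the residual case $a=b$ with $g$ having two cycles on $\Omega$, one views $\Omega$ as the edge set of a $2$-regular bipartite graph on $\Omega/\phi\sqcup\Omega/\psi$, whose connectedness (from the absence of a common coarsening) forces it to be a $2a$-cycle, whence $G/\Core_G(H)$ embeds in $D_a$ and equals it by transitivity, with $a$ prime by primitivity. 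A counting argument on $|\Omega|\le ab$ separates the case $\gcd(a,b)=2$, where $\Omega=\Omega/\phi\times\Omega/\psi$ and conclusion (i) holds. Without some version of (a) and (b), your ``exchange argument in the spirit of Zassenhaus--Schreier'' has no mechanism to rule out bad diamonds, so the proposal as written does not establish the theorem.
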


By means of a now-standard inductive argument due to Netto
(cf.\ Lemma~\ref{dl}),
this result is a consequence of the following:

\begin{thm}\label{diamond}
Let $G$ be a transitive group of permutations of a finite set
$\Omega$, fix $\omega\in\Omega$, and suppose some element of
$G$ has at most two cycles on $\Omega$.  If $A$ and $B$ are
distinct maximal subgroups of $G$ for which $H:=A\cap B$
contains $G_{\omega}$, then one of the following holds:
\begin{enumerate}
\item[(\thethm.1)] $H$  is a maximal subgroup of both $A$
  and $B$, and $G=AB$; or
\item[(\thethm.2)] $G/\Core_G(H)$ is dihedral of order $2r$,
 with $r$ prime, and both $A$ and $B$ have order $2$
 images in $G/\Core_G(H)$.
\end{enumerate}
\end{thm}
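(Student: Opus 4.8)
The plan is to pass to a faithful action, attach to $A$ and $B$ their two block systems on a single coset space, embed that space in the product of the two primitive quotients, and then let the cycles of the distinguished element drive a short combinatorial case analysis.

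First I would reduce to a faithful action on $\Phi:=G/H$. Since $\Core_G(H)\le H\le A,B$, the correspondence theorem identifies the interval between $H$ and $G$ with that between $1$ and $\bar G:=G/\Core_G(H)$, and the images $\bar A,\bar B$ remain distinct maximal subgroups with $\bar A\cap\bar B=\bar H$. As $G_\omega\le H$, the set $\Phi$ is a quotient $G$-set of $\Omega$, so $\sigma$ still has at most two cycles on $\Phi$, while $\bar G$ acts faithfully on $\Phi$ (the kernel being $\Core_G(H)$). Thus I may assume $\Core_G(H)=1$ and argue with the faithful transitive action of $G$ on $\Phi$, aiming to produce either alternative (1) or the conclusion that $G\cong D_{2r}$ ($r$ prime) with $A,B$ of order $2$. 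Because $A,B$ are distinct maximal subgroups, $\langle A,B\rangle=G$, so the two $G$-block systems $\mathcal A,\mathcal B$ on $\Phi$ (blocks the $A$- and $B$-orbits of the base point) join to the total system and meet in singletons.

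Next I would use the grid embedding $\Phi\hookrightarrow\Phi_A\times\Phi_B$, $gH\mapsto(gA,gB)$, which is injective with image the $G$-orbit of the base point, where $\Phi_A=G/A$, $\Phi_B=G/B$ are the two primitive quotients. For $x\in\Phi$ the $\sigma$-cycle through $x$ has length equal to the $\lcm$ of the lengths of the $\sigma$-cycles through its two coordinates; so the points lying over the $i$-th cycle of $\Phi_A$ (length $a_i$) and the $j$-th cycle of $\Phi_B$ (length $b_j$) form a union of $\sigma$-cycles each of length $\lcm(a_i,b_j)$, say $c_{ij}\ge 0$ of them. Then $\sum_{ij}c_{ij}$ is the number of cycles of $\sigma$ on $\Phi$, hence at most $2$, with $c_{ij}\ge1$ whenever a point lies over $(i,j)$. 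Since both projections $\Phi\to\Phi_A$ and $\Phi\to\Phi_B$ are surjective, every row and column of $(c_{ij})$ is nonzero; as at most two entries are nonzero, $\sigma$ has at most two cycles on each of $\Phi_A,\Phi_B$, leaving only a few configurations. If $\sigma$ has one cycle on $\Phi$, then $\langle\sigma\rangle$ is transitive, $\Phi\cong\mathbb{Z}/N$, every $G$-block system is a $\langle\sigma\rangle$-block system (so corresponds to a divisor of $N$), and the set $S$ of these divisors is closed under $\gcd$ and $\lcm$ (common refinements and coarsenings of block systems are block systems). Here $\mathcal A\wedge\mathcal B$ trivial and $\mathcal A\vee\mathcal B$ total give $\gcd(s,t)=1$ and $\lcm(s,t)=N$ for the block sizes $s=[A:H],t=[B:H]$, whence $N=st$ and the grid map is a bijection, i.e.\ $G=AB$; a short argument inside $S$ (any divisor strictly between $1$ and $s$ would, via $\gcd$ and $\lcm$ with $s$ or $t$, yield a $G$-block system strictly between $\mathcal A$ or $\mathcal B$ and the total one) then shows $H$ is maximal in $A$ and in $B$. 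This is alternative (1).

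The remaining configurations have $\sigma$ with exactly two cycles on $\Phi$. When $\sigma$ fails to be transitive on $\Phi_A$ or on $\Phi_B$, I expect to split $\Phi$ into the two $\langle\sigma\rangle$-orbits and rerun a divisor-lattice argument on each, again reaching alternative (1). The crux is the case where $\sigma$ is transitive on both $\Phi_A$ and $\Phi_B$ yet has two cycles on $\Phi$ (a single entry $c_{11}=2$): here $\langle\sigma\rangle$ has exactly two equal orbits on $\Phi$ of length $\lcm(m,k)$, with $m=[G:A],k=[G:B]$, and the product formula shows $G=AB$ precisely when $\gcd(m,k)=2$, in which case I would confirm alternative (1) by checking maximality of $H$ in $A,B$ through the block lattice. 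The hard part is the linked subcase $G\ne AB$: I would show that the blocks of $\mathcal A$ and of $\mathcal B$ must pair up the two $\langle\sigma\rangle$-orbits (just as the reflection-cosets do in $D_{2r}$), detecting this by analyzing how a single block can meet both orbits and using $A\cap B=H$ to force $[A:H]=[B:H]=2$; then $G/H$ is generated by two involution-images together with a regular cyclic rotation subgroup, hence dihedral, and the primitivity of $G$ on $\Phi_A$ (maximality of $A$) forces that rotation subgroup to have prime order $r$, giving $G/\Core_G(H)\cong D_{2r}$ with $A,B$ of order $2$, which is alternative (2). I expect this linked two-orbit analysis—separating $G=AB$ from the dihedral configuration and pinning down $r$ prime—to be the main obstacle.
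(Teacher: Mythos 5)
Your architecture is the same as the paper's: pass to the coset space $G/H$, regard $A$ and $B$ as block systems $\mathcal A,\mathcal B$ with trivial meet and total join, embed $\Phi$ into $\Phi_A\times\Phi_B$, and let the $\lcm$ formula for the cycle lengths of $\sigma$ drive a case analysis ending in a dihedral identification. The one-cycle (Ritt) case and the counting that separates $\gcd(m,k)=2$ from $\gcd(m,k)>2$ are fine. But the two sub-cases you defer with ``I would show'' are exactly where the content of the theorem lies, and your sketches omit the ideas that make them work. In the linked case ($\sigma$ transitive on both quotients, two cycles on $\Phi$, $G\ne AB$), the $\mathcal A$-block through the base point has size $[A:H]=\#\Phi/m=2\lcm(m,k)/m=2k/\gcd(m,k)$, so $[A:H]=2$ forces $k\mid m$, and symmetrically $m\mid k$; nothing in your plan produces $m=k$. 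Appealing to $A\cap B=H$ cannot do it --- that hypothesis was already spent on the injectivity of the grid map. The paper gets $m=k$ from an extra device you never introduce: the coarsening $\phi^\psi$ of $\mathcal A$ that identifies two $A$-blocks when they meet the same $B$-blocks. This is again a $G$-congruence, so primitivity of $G/A$ forces it to be $\phi$ or total, and a cycle-length computation shows that happens only when $m\mid k$ or $\gcd(m,k)=1$. Only after $m=k$ is the bipartite incidence graph of $A$-blocks versus $B$-blocks $2$-regular, hence (by connectedness, i.e.\ trivial join) a $2m$-cycle with automorphism group $D_m$, which pins down the dihedral structure and, via primitivity, the primality of $m$.

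The second gap is the case $\gcd(m,k)=2$, $G=AB$: you must still show $H$ is maximal in both $A$ and $B$, and here $\sigma$ is \emph{not} transitive on $\Phi$, so there is no divisor lattice of $\mathbb{Z}/N$ to invoke; ``checking maximality through the block lattice'' is not an argument. The paper's Case 4b takes a hypothetical congruence $\mu$ strictly between the fine one and $\psi$, rules out $\sigma$ having two cycles on $\Phi/\mu$ by its Case 3, deduces that $\sigma$ is an $mk/2$-cycle on $\Phi/\mu$ so that every $\mu$-class has size $2$, and then derives a contradiction from the congruence $\mu^\phi\join\psi$ (falling back into the dihedral case with a non-prime parameter). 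Nothing in your proposal supplies a substitute. A smaller but real issue: when $\sigma$ has two cycles on one of $\Phi_A,\Phi_B$, ``splitting $\Phi$ into the two $\sigma$-orbits and rerunning the divisor argument on each'' discards the interaction between the orbits; the paper instead uses transitivity of $G$ on the quotient to equate block sizes over the two cycles, forcing the two gcd's to coincide and then (from the trivial join) to equal $1$, and separately observes that if $\sigma$ has two cycles on \emph{both} quotients then the partition of $\Phi$ into $\sigma$-cycles is a nontrivial common coarsening, contradicting $\langle A,B\rangle=G$. Until these three points are filled in, the proposal is a correct outline of the paper's strategy rather than a proof.
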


Here, as usual, $\Core_G(H)$ is the maximal normal subgroup
of $G$ contained in $H$.  Possibility (\ref{diamond}.2)
illustrates a phenomenon not arising in
Proposition~\ref{abelianintro}: if $G$ contains a transitive
quasi-Hamiltonian subgroup, then (\ref{diamond}.1) holds.

We will give examples showing that Assertion~\ref{Jordan} need
not hold for transitive groups $G$ containing an element with
three cycles, and likewise for transitive groups $G$ containing
an abelian subgroup with two orbits.  However, in some sense the
counterexamples in both situations appear to be bounded, so it
may be possible to classify the counterexamples to
Theorem~\ref{diamond} in these more general situations.

Our work was motivated by geometric applications.  Specifically,
Theorem~\ref{mainthm} has the following consequence:

\begin{cor}\label{curves}
Let $C$ and $D$ be smooth, projective, geometrically irreducible
curves over a field $K$, and let $f:C\to D$ be a nonconstant
separable rational map defined over $K$.  Suppose that some place
of $D$ is tamely ramified in $f$, and lies under at most two
places of $C$.  Let $C\to A_1\to A_2\to\dots\to D$ and
$C\to B_1\to B_2\to\dots\to D$ be maximal decompositions of $f$
into rational maps of degree more than $1$.  Then these
decompositions have the same length, and (up to permutation) the
same sequence of degrees of the involved indecomposable maps.
\end{cor}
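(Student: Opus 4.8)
The plan is to translate the geometric statement into the group-theoretic language of Theorem~\ref{mainthm} by way of the Galois correspondence for function fields, and then to check that the ramification hypothesis supplies exactly the cyclic subgroup that theorem requires. First I would pass to function fields. Since $f$ is nonconstant and separable, the comorphism makes $K(C)$ a finite separable extension of $K(D)$ of degree $n=\deg f$, and geometric irreducibility of $C$ ensures that $K$ is algebraically closed in $K(C)$, so the extension has exact constant field $K$. Let $L$ be a Galois closure of $K(C)/K(D)$, put $G=\Aut(L/K(D))$ and $H=\Aut(L/K(C))$, and note that $G$ is finite. By Galois theory the intermediate fields $K(D)\subseteq M\subseteq K(C)$ correspond, in an inclusion-reversing way, to the groups between $H$ and $G$, with each relative degree $[M':M]$ equal to the relative index of the corresponding groups. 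A factorization $C\to E\to D$ is the same as a choice of such an $M=K(E)$, the factor $E\to D$ is indecomposable precisely when the corresponding group is maximal in the next one up, and so a maximal decomposition of $f$ into maps of degree more than $1$ corresponds to a maximal chain of groups between $H$ and $G$, the degrees of the indecomposable factors being exactly the relative indices along the chain. Thus the conclusion of the corollary is literally the statement that $(G,H)$ has the Jordan property of Definition~\ref{Jordan}.

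Next I would feed in the ramification hypothesis. Here $G/H$ is the transitive $G$-set in question. For the distinguished place $P$ of $D$ I would fix a place of $L$ lying above it and consider the associated decomposition and inertia subgroups of $G$. The places of $C$ above $P$ are in bijection with the orbits on $G/H$ of the decomposition group of the chosen place, so the hypothesis that $P$ lies under at most two places of $C$ says that this group has at most two orbits; and tameness of $P$ forces the inertia subgroup to be cyclic. The aim of this step is to extract from these data a single cyclic subgroup $I\le G$ with at most two orbits on $G/H$, that is, with $G=IH\cup IgH$ for some $g\in G$. Granting that, Theorem~\ref{mainthm} applies directly to $(G,H)$ and delivers the Jordan property, completing the proof.

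I expect this last extraction to be the main obstacle, because the number of places above $P$ is controlled by the decomposition group, whereas cyclicity is guaranteed only for the (possibly smaller) inertia group, and passing to a smaller group can only increase the number of orbits. The clean prototype is the case of trivial residue extension, as in the classical complex setting: there the decomposition group coincides with the cyclic tame inertia group, its orbits are exactly the places of $C$ above $P$, and one simply takes $I$ to be this group, recovering Ritt's situation. For a general base field I would reduce to this prototype by working with a geometric place and its tame inertia, the delicate point being to carry the resulting index data back to the decompositions defined over $K$, i.e.\ to the arithmetic pair $(G,H)$. It is precisely at this reduction that the tameness hypothesis is indispensable, since it is what guarantees that the relevant local subgroup is cyclic and hence that Theorem~\ref{mainthm} is applicable at all.
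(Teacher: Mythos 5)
Your first paragraph is correct and is exactly the intended reduction: decompositions of $f$ over $K$ correspond to chains of fields between $K(D)$ and $K(C)$, hence to chains of groups between $H=\mathrm{Gal}(L/K(C))$ and $G=\mathrm{Gal}(L/K(D))$ in the Galois closure $L$, and the corollary is the Jordan property for $(G,H)$. But the proposal then stops at precisely the step that needs an argument, and the route you sketch for closing it points the wrong way. First, the hypothesis has to be read geometrically: ``places of $C$ over $P$'' must mean points of $C_{\overline{K}}$ over $P$ (equivalently, places of $\overline{K}(C)$), not closed points of $C$ over $K$. Under the closed-point reading the statement is simply false: take a geometrically irreducible $A_4$-Galois cover $C\to\Line_{\mathbb{Q}}$; by Hilbert irreducibility some unramified (hence tame) fibre is a single closed point, with residue field an $A_4$-extension of $\mathbb{Q}$, so that place of $\Line$ lies under \emph{one} place of $C$ --- yet $(A_4,1)$ fails the Jordan property ($1<C_2<V_4<A_4$ versus $1<C_3<A_4$). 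This shows that no argument can manufacture the required cyclic subgroup out of the decomposition/inertia data at a closed point, which is exactly the mismatch you identified between ``decomposition group has two orbits'' and ``inertia group is cyclic.''

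With the geometric reading the extraction is immediate, and --- this is the second correction --- it requires no ``carrying of index data back to $K$.'' That transfer is the step that would actually fail, since intermediate fields of $\overline{K}(C)/\overline{K}(D)$ need not descend to $K$, so a maximal chain over $K$ need not remain maximal over $\overline{K}$; one should not try to prove the Jordan property for the geometric pair and descend it. Instead, let $K'$ be the algebraic closure of $K$ in $L$ and $\overline{G}=\mathrm{Gal}(L/K'(D))$ the geometric monodromy group, a normal subgroup of $G$. Because $K$ is algebraically closed in $K(C)$ one has $G=H\overline{G}$, so $G/H$ and $\overline{G}/(\overline{G}\cap H)$ are isomorphic as $\overline{G}$-sets. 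Choose a place of $\overline{K}L$ over $P$ and let $I$ be its inertia group: $I$ is cyclic because $P$ is tame in $f$ and hence (by Abhyankar's lemma) in $L/K(D)$; $I$ lies in $\overline{G}\le G$; and since all residue extensions over $\overline{K}$ are trivial, decomposition equals inertia there, so the orbits of $I$ on $G/H$ are exactly the points of $C_{\overline{K}}$ over $P$, of which there are at most two. Theorem~\ref{mainthm} now applies directly to the arithmetic pair $(G,H)$ with this $I$, and your first paragraph finishes the proof. (Your ``clean prototype'' is the Laurent case, Corollary~\ref{Laurent}, where the two points of $C$ over $P$ are $K$-rational and the distinction between the two readings disappears.)
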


The simplest case $C=D=\Line$ is already interesting: there $f$
is essentially a Laurent polynomial.

\begin{cor}\label{Laurent}
Let $f\in K[x,x^{-1}]$ be a Laurent polynomial over a field $K$,
and assume that neither $x=0$ nor $x=\infty$ is a pole of $f$ of
order divisible by $\charp(K)$.  Write
$f=a_1\circ\dots\circ a_r = b_1\circ\dots\circ b_s$ where
$a_i,b_j\in K(x)$ are indecomposable and have degree more than $1$.
Then $r=s$, and the sequence $(\deg(a_1),\dots,\deg(a_r))$ is a
permutation of $(\deg(b_1),\dots,\deg(b_s))$.
\end{cor}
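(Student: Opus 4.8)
The plan is to deduce this from Corollary~\ref{curves} by taking $C=D=\Line$ and viewing $f$ as a separable morphism $\Line\to\Line$ defined over $K$. First I would record the two features of a Laurent polynomial that make the geometric hypotheses hold. Since $f=\sum_i c_i x^i$ is regular at every point of $\Line\setminus\{0,\infty\}$, its only poles lie among $\{0,\infty\}$; equivalently, the preimage of the place $\infty$ of the target $D=\Line$ consists of the poles of $f$, so $\infty$ lies under at most the two places $0,\infty$ of the source $C=\Line$. Moreover the ramification index of $f$ at $0$ (resp.\ $\infty$) equals the order of the pole of $f$ there, so the hypothesis that neither pole order is divisible by $\charp(K)$ says exactly that the place $\infty$ of $D$ is tamely ramified in $f$. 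Thus $\infty\in D$ is a place satisfying both requirements of Corollary~\ref{curves}.

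Next I would verify that $f$ is separable, which Corollary~\ref{curves} also requires. In characteristic $0$ this is automatic. If $\charp(K)=p>0$ and $f$ were inseparable, then $df/dx=0$, forcing every exponent appearing in $f$ to be divisible by $p$, so $f=g(x^p)$ for some Laurent polynomial $g$. A nonconstant Laurent polynomial has a pole at $0$ or at $\infty$, and for $f=g(x^p)$ that pole order is $p$ times the corresponding pole order of $g$, hence divisible by $p$, contradicting the hypothesis. Therefore $f$ is separable.

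It then remains to match the two notions of decomposition. A decomposition $f=a_1\circ\dots\circ a_r$ into rational functions in $K(x)$ corresponds to a chain of dominant maps $\Line\to A_1\to\dots\to A_{r-1}\to\Line$ of curves over $K$. Each intermediate curve $A_i$ is dominated by $\Line$, so its function field is a subfield of $K(x)$ properly containing $K$ and hence, by L\"uroth's theorem, is $K$-isomorphic to a rational function field; as $A_i$ is smooth and projective, $A_i\cong\Line$ over $K$. Conversely each such chain yields a decomposition of $f$ into rational functions. Under this correspondence ``indecomposable of degree $>1$'' is exactly ``maximal among decompositions into maps of degree $>1$'', and $\deg(a_i)$ equals the degree of the corresponding map, so the two degree sequences coincide. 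Applying Corollary~\ref{curves} to $f\colon\Line\to\Line$ now gives $r=s$ together with the equality of degree multisets, as claimed.

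I expect the only genuinely delicate point to be the reduction in the last paragraph, where one must check that passing between factorizations of $f$ as a rational function and chains of intermediate curves is harmless: the intermediate curves are forced to be copies of $\Line$, and the ambiguity in the factorization is only by M\"obius changes of coordinate on these copies, which preserve degrees and indecomposability. The separability check and the pole/ramification bookkeeping are then routine once the correspondence is set up.
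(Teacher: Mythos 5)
Your proposal is correct and follows exactly the route the paper intends: Corollary~\ref{Laurent} is the special case $C=D=\Line$ of Corollary~\ref{curves}, with the place $\infty$ of the target lying under only the places $0$ and $\infty$ of the source, tameness there being equivalent to the pole orders not being divisible by $\charp(K)$, and separability plus the L\"uroth/decomposition bookkeeping handled as you describe. The paper leaves this deduction implicit, and your write-up supplies the details faithfully.
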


In case $K=\C$, this result was proved in the recent paper \cite{Z};
subsequently another proof was given in \cite{P}.  Conversely, the
$K=\C$ case of Corollary~\ref{Laurent} is equivalent to some special
cases of Theorem~\ref{mainthm}, in view of Riemann's existence
theorem and knowledge of the fundamental group of the punctured
sphere.  Specifically, the $K=\C$ case of Corollary~\ref{Laurent}
is equivalent to Theorem~\ref{mainthm} for groups $G$ having
generators $g_1,\dots,g_k$ where $g_1$ has two cycles and
$g_1g_2\dots g_k=1$ and $2\#S-2=\sum_i (\#S - \#\cyc(g_i))$.

The proofs in \cite{Z} and \cite{P} relied on various
algebro-geometric calculations, which were much more complicated
than the proof in the present paper.  On the other hand, in case
$K=\C$, those papers obtained quite precise information about the
shape of the indecomposable rational functions $a_i$ and $b_j$
occurring in Corollary~\ref{Laurent}.  This precise information
relies on the fact that $C$ and $D$ have genus zero, so it is
not surprising that geometry is required for the proof.
However, since Corollary~\ref{Laurent} has a group-theoretic
interpretation, this result seemed to merit a group-theoretic
proof, which we produce in the present paper.  As a bonus, this
group-theoretic proof implies Corollary~\ref{Laurent} even for
fields of positive characteristic, and also implies
Corollary~\ref{curves}; essentially, the group-theoretic proof uses
just the ramification data at the special point, whereas the
geometric proofs used the ramification data at all points, hence
required stronger hypotheses.

Ritt proved Corollary~\ref{Laurent} in case $K=\C$ and $f$ is a
polynomial \cite{Ritt}.  He also determined the possibilities for
the $a_i$'s and $b_j$'s, and his results have been applied to a
wide range of topics
(cf.\ \cite{BWZ,BT,GTZ,GTZ2,MP,P1,P2,PRY,Zannier}, among others).
Here we mention just the most recent application, from \cite{GTZ2}:

\begin{thm}[Ghioca--Tucker--Zieve]
Let $x_0,y_0\in\C$ and $f,g\in\C[x]$ satisfy $\deg(f),\deg(g)\ne 1$.
If the orbits
\[
\{x_0, f(x_0), f(f(x_0)), \dots\}\quad\text{ and }\quad
\{y_0, g(y_0), g(g(y_0)), \dots\}
\]
have infinite intersection, then $f$ and $g$ have a common iterate.
\end{thm}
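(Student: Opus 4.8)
The plan is to reduce the infinite orbit intersection to a single \emph{arithmetic progression} of solutions by a dynamical Mordell--Lang argument, and then to extract the common iterate by a rigidity principle for polynomials that agree along an infinite orbit. Write $d_f=\deg f\ge 2$, $d_g=\deg g\ge 2$, and set
\[
  S=\{(m,n)\in\mathbb{N}^2 : f^{\circ m}(x_0)=g^{\circ n}(y_0)\}.
\]
A preliminary observation simplifies the analysis: if $x_0$ were preperiodic under $f$ then its forward orbit would be finite, and hence its intersection with any set would be finite, contradicting the hypothesis; thus $x_0$ (and likewise $y_0$) is not preperiodic, both orbits are infinite, the points $f^{\circ m}(x_0)$ are pairwise distinct, and $S$ is infinite. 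Since $f,g,x_0,y_0$ lie in a finitely generated extension of $\mathbb{Q}$, after specializing to a number field I may assume canonical heights $\hat h_f,\hat h_g$ are available, with the standard comparison $|\hat h_f-h|\le C$ to a fixed Weil height $h$, and with $\hat h_f(x_0)>0$, $\hat h_g(y_0)>0$ precisely because $x_0,y_0$ are not preperiodic.

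The heights furnish an a priori constraint on $S$. At any common value $z=f^{\circ m}(x_0)=g^{\circ n}(y_0)$ the functional equation $\hat h_f\circ f=d_f\,\hat h_f$ gives $\hat h_f(z)=d_f^{\,m}\hat h_f(x_0)$, and similarly $\hat h_g(z)=d_g^{\,n}\hat h_g(y_0)$; since $\hat h_f(z)=h(z)+O(1)=\hat h_g(z)+O(1)$, we obtain
\[
  d_f^{\,m}\,\hat h_f(x_0)=d_g^{\,n}\,\hat h_g(y_0)+O(1).
\]
As the heights tend to infinity along $S$ this forces $n\sim(\log d_f/\log d_g)\,m$, so the solution pairs accumulate in a bounded neighborhood of a line through the origin. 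This does not by itself bound $S$, but it confines the solutions to one direction, which is exactly the configuration the next step resolves.

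The heart of the proof is to promote this clustering to an honest progression. View $F(x,y)=(f(x),y)$ and $G(x,y)=(x,g(y))$ as commuting polynomial self-maps of $\mathbb{A}^2$; then $S$ is precisely the return set $\{(m,n):F^{\circ m}G^{\circ n}(x_0,y_0)\in\Delta\}$ for the diagonal $\Delta$. I would invoke the dynamical Mordell--Lang theorem for this $\mathbb{Z}^2$-action to conclude that $S$ is a finite union of translated subsemigroups of $\mathbb{N}^2$; being infinite, $S$ then contains an arithmetic progression, i.e.\ there exist $a,b\ge 1$ and $c,d\ge 0$ with $f^{\circ(c+ka)}(x_0)=g^{\circ(d+kb)}(y_0)$ for all $k\ge 0$. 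The natural route to the required statement is $p$-adic: after reduction modulo a suitable prime, each coordinate orbit interpolates to a $p$-adic analytic function of its exponent, so the return set is cut out by the vanishing of $p$-adic analytic functions and is therefore a finite union of cosets of subgroups of $\mathbb{Z}^2$. I expect this to be the main obstacle, since controlling a \emph{two}-parameter return set is genuinely harder than the classical one-parameter Skolem--Mahler--Lech situation and requires either the strongest available $p$-adic arithmetic-dynamics machinery or careful exploitation of the polynomial structure.

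Granting the progression, the conclusion is a short rigidity argument. Put $P=f^{\circ a}$ and $Q=g^{\circ b}$, and let $w=f^{\circ c}(x_0)=g^{\circ d}(y_0)$ be the $k=0$ value. The progression says $P^{\circ k}(w)=Q^{\circ k}(w)$ for every $k\ge 0$; writing $w_k$ for this common value, we have $P(w_{k-1})=w_k=Q(w_{k-1})$, so $P$ and $Q$ agree at every point of the orbit $\{w_0,w_1,w_2,\dots\}$. Because $x_0$ is not preperiodic, the values $w_k=f^{\circ(c+ka)}(x_0)$ are pairwise distinct, so $P$ and $Q$ are polynomials agreeing at infinitely many points; hence $P=Q$, that is, $f^{\circ a}=g^{\circ b}$ is a common iterate of $f$ and $g$.
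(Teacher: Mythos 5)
First, a point of comparison: the paper does not prove this theorem at all --- it is quoted from \cite{GTZ2} precisely as an \emph{application} of Ritt's decomposition theory --- so there is no internal proof to measure your argument against. Judged on its own terms, your proposal has a genuine gap at its central step: the appeal to a ``dynamical Mordell--Lang theorem for the $\mathbb{Z}^2$-action'' generated by $F(x,y)=(f(x),y)$ and $G(x,y)=(x,g(y))$. No such theorem is available, and the statement you need --- that the return set of a point to a subvariety under a rank-two semigroup of commuting endomorphisms is a finite union of translated subsemigroups of $\mathbb{N}^2$ --- is false in general for commuting polynomial maps; Ghioca, Tucker and Zieve themselves exhibit counterexamples to this higher-rank formulation in \cite{GTZ}. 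The $p$-adic route you sketch does not repair this: the Skolem--Mahler--Lech/Bell--Ghioca--Tucker method interpolates a one-parameter orbit by a $p$-adic analytic map and then uses the fact that a nonzero $p$-adic analytic function of \emph{one} variable on $\mathbb{Z}_p$ has finitely many zeros, whereas a two-parameter return set is cut out by the zero locus of an analytic function on $\mathbb{Z}_p^2$, which can be an essentially arbitrary analytic curve rather than a finite union of cosets. So the step producing the progression $f^{\circ(c+ka)}(x_0)=g^{\circ(d+kb)}(y_0)$ is, in effect, the whole theorem and cannot be outsourced. (Your surrounding steps are sound: the non-preperiodicity reduction, the canonical-height estimate confining $(m,n)$ to a bounded neighborhood of a line, and the closing rigidity argument that two polynomials agreeing on an infinite orbit coincide, which correctly yields $f^{\circ a}=g^{\circ b}$ once the progression is granted.)

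The actual argument --- and the reason this theorem appears in the present paper at all --- runs through entirely different machinery. After the height reduction, one studies the plane curves $f^{\circ m}(X)=g^{\circ n}(Y)$ carrying infinitely many of the relevant points, applies Siegel's theorem on integral points to force components of genus $0$ with at most two points at infinity, and then invokes the Bilu--Tichy classification together with Ritt's theory of polynomial decomposition to determine the possible shapes of $f^{\circ m}$ and $g^{\circ n}$ and extract the common iterate. If you want to salvage your outline, the missing ingredient is a substitute for the two-parameter Mordell--Lang black box, and that substitute is exactly this Diophantine-plus-Ritt analysis.
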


It would be of great interest to extend this result to Laurent
polynomials or more general rational functions.  We suspect the
group theoretic perspective of the present paper may be useful
in this endeavor.

We now summarize the contents of this paper.  In the next section
we present a version of the diamond lemma, which we use in
Section~\ref{secab} to prove Proposition~\ref{abelianintro}.
In Section~\ref{Gsets} we record the terminology of $G$-sets
needed for the proofs of our main results, and in Section~\ref{secab}
we prove Theorems~\ref{mainthm} and \ref{diamond}.
We conclude in the final section with some examples and speculations.


\section{Diamond lemma}

In order to prove that a pair of groups has the Jordan property,
it suffices to consider pairs of chains whose common least element
is the intersection of their two second-largest elements.  This
generalizes an argument due to Netto~\cite{N}.

\begin{lemma}\label{dl} Let $\cC$ be a set of
pairs $(G,H)$ of a group $G$ and a finite-index subgroup $H$, and
suppose that if $(G,H)\in\cC$ and $H<G_0<G$ then $\cC$ contains both
$(G_0,H)$ and $(G,G_0)$.
Suppose further that, if $(G,H)\in\cC$ and $A,B$ are distinct maximal
subgroups of $G$ with $H=A\cap B$, then there exist maximal chains
of groups $G > A > \dots > H$ and $G > B > \dots > H$ such that the
sequences of indices in the two chains are the same up to
permutation.  It follows that every pair in $\cC$ has the Jordan
property.
\end{lemma}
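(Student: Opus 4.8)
The plan is to prove Lemma~\ref{dl} by a double induction, on the index $[G:H]$ and on a secondary quantity measuring how far two given maximal chains are from satisfying the conclusion. The statement to prove is that every pair in $\cC$ has the Jordan property, so I fix a pair $(G,H)\in\cC$ and two maximal chains
\[
H=A_0\subsetneqq A_1\subsetneqq\dots\subsetneqq A_a=G,\qquad
H=B_0\subsetneqq B_1\subsetneqq\dots\subsetneqq B_b=G,
\]
and I want to show $a=b$ with matching index-multisets. I would induct on $[G:H]$; the base case $[G:H]$ prime (or $H=G$) is immediate since then both chains are trivial. For the inductive step, the top groups $A:=A_{a-1}$ and $B:=B_{b-1}$ are maximal in $G$.

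\emph{First I would dispose of the case $A=B$.} If the two second-largest groups coincide, then both chains below them are maximal chains between $H$ and $A=A_{a-1}$. Since the closure hypothesis on $\cC$ guarantees $(A,H)\in\cC$ (indeed $\cC$ is closed under passing to the pair consisting of an intermediate group and $H$), and since $[A:H]<[G:H]$, the inductive hypothesis gives that the two lower chains have equal length and matching indices. Appending the common top index $[G:A]=[G:B]$ finishes this case.

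\emph{The main case is $A\neq B$}, and here is where the diamond hypothesis enters. Set $K:=A\cap B$. First I claim I may reduce to $K=H$: since $A$ and $B$ are distinct maximal subgroups, $K$ is a proper subgroup of each, and $K\supseteq H$. I would build an auxiliary chain through $K$. By hypothesis applied to the pair $(G,K)$ — which lies in $\cC$ because $(G,H)\in\cC$, $H\le K<G$, and $\cC$ is closed under the operation $(G,H)\mapsto(G,K)$ wait, more precisely the closure gives $(G,G_0)\in\cC$ for intermediate $G_0$, so $(G,K)\in\cC$ — the diamond condition produces maximal chains $G>A>\dots>K$ and $G>B>\dots>K$ with index-multisets agreeing up to permutation. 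Splicing any maximal chain from $K$ down to $H$ (which exists, and whose index multiset is well-defined because $(K,H)\in\cC$ and $[K:H]<[G:H]$ lets induction apply to $K$) below each of these, I obtain two maximal chains $G>A>\dots>H$ and $G>B>\dots>H$ \emph{whose top groups are the prescribed $A$ and $B$} and whose total index-multisets match. Now I compare these spliced chains against the two \emph{original} chains: each original chain shares its top group ($A$, respectively $B$) with the corresponding spliced chain, so by the $A=B$ case already handled (applied inside $A$ and inside $B$, both of smaller index than $G$) the original chain and the spliced chain through it have matching index-multisets. Chaining these equalities transitively — original-$A$ matches spliced-$A$ matches spliced-$B$ matches original-$B$ — yields that the two original chains have the same length and matching indices.

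\emph{The step I expect to be the main obstacle} is verifying that all the auxiliary pairs I invoke genuinely lie in $\cC$ and genuinely have strictly smaller index, so that the induction is well-founded and every appeal to the inductive hypothesis or to the diamond condition is legitimate; in particular one must check $(G,K)\in\cC$, $(K,H)\in\cC$, $(A,H)\in\cC$, and $(B,H)\in\cC$ using only the two stated closure properties of $\cC$. The combinatorial bookkeeping — that the multiset of indices is genuinely preserved under the splicing and the transitive comparison, rather than merely the total index $[G:H]$ — also requires care, but it reduces to the fact that an index-multiset along a chain is simply the multiset of consecutive quotients, which is additive under splicing at a common group. Once these membership and well-foundedness checks are in place, the argument is a clean descent.
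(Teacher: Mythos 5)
Your proposal is correct and follows essentially the same route as the paper's own proof: induction on $[G:H]$, disposing of the case where the two top maximal subgroups coincide, and otherwise applying the diamond hypothesis to $(G, A\cap B)$, splicing a common maximal chain from $A\cap B$ down to $H$, and comparing against the original chains via the inductive hypothesis for $(A,H)$ and $(B,H)$. The closure and well-foundedness checks you flag as the main obstacle all go through exactly as you anticipate.
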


\begin{proof}
Pick $(G,H)\in\cC$ with $[G:H]=d$, and suppose the result holds for
pairs with smaller index.  Let $A_0 < A_1 < \dots < A_r$ and
$B_0<\dots<B_s$ be maximal chains of groups between $H$ and $G$.
Then $A_r=B_s=G$, and $A_{r-1}$ and $B_{s-1}$ are maximal subgroups
of $G$.  If $A_{r-1}=B_{s-1}$ then the result follows from the
corresponding result for $(A_{r-1},H)$.  So assume
$A_{r-1}\ne B_{s-1}$, and let $K=A_{r-1}\cap B_{s-1}$.
The hypothesis for the case $(G,K)$ is that there are maximal
chains $G>A_{r-1}>\dots>K$ and $G>B_{s-1}>\dots>K$ for which
the sequences of indices are the same up to permutation.
Pick a maximal chain $K>\dots>H$, and concatenate with the previous
chains to get maximal chains
$G>A_{r-1}>\dots>H$ and $G>B_{s-1}>\dots>H$ with the same multiset
of indices.  The result for $(A_{r-1},H)$ implies that the
multiset of indices for the first chain equals that for the
$A_i$ chain.  The result for $(B_{s-1},H)$ implies that the multiset
for the second chain equals that for the $B_j$ chain.
Thus, the $A_i$ chain has the same multiset of indices as does
the $B_j$ chain.
\end{proof}


\section{Groups with a transitive quasi-Hamiltonian subgroup}
\label{secab}

In this section we prove Proposition~\ref{abelianintro}.
In light of Lemma~\ref{dl}, it suffices to prove the following:

\begin{prop}\label{abelian}
Let $H$ be a subgroup of a group $G$, and suppose that $G=HI$
for some finite subgroup $I$ of $G$ satisfying $(*)$.
If $A,B$ are distinct maximal subgroups of $G$ with
$A\cap B=H$, then $H$ is maximal in both $A$ and $B$, and
$G=AB$.
\end{prop}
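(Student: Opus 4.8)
The plan is to transfer the problem into the subgroup lattice of $I$ by means of the Dedekind modular law, exploiting the fact that $(*)$ forces any product of subgroups of $I$ to again be a subgroup. First I would set $I_A = A\cap I$ and $I_B = B\cap I$. Since $H\le A\le G=HI$, the modular law yields $A = H(A\cap I)=HI_A$, and likewise $B=HI_B$; moreover $H\cap I = (A\cap B)\cap I = I_A\cap I_B$. By $(*)$, $I_A I_B = I_B I_A$ is a subgroup of $I$, and similarly for any $X$ with $H\le X\le A$ the set $I_X I_B$ (where $I_X=X\cap I$) is a subgroup of $I$.

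The crucial closure step, which I expect to be the main obstacle, is to show that $XI_B$ is a subgroup of $G$ for every $X$ with $H\le X\le A$. Writing $X=HI_X$ and $J=I_XI_B$, one has $XI_B = HJ$, so it suffices to prove $HJ=JH$. This I would deduce from three permutability relations: $HI_X=I_XH$ (both equal $X$), $HI_B=I_BH$ (both equal $B$), and $I_XI_B=I_BI_X$ (from $(*)$); indeed $JH = I_X(I_BH)=I_X(HI_B)=(I_XH)I_B=(HI_X)I_B=HJ$. The subtlety here is that one cannot simply quote modularity of the lattice of subgroups of $I$ and transport it to the interval $[H,G]$: the map $X\mapsto X\cap I$ is only an injection of lattices and need not be onto, since a general subgroup between $H\cap I$ and $I$ need not yield a subgroup when multiplied by $H$. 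The point is that for the specific products $XI_B$ the permutability of $H$ with $I_X$ and with $I_B$ (both consequences of $X$ and $B$ being groups) rescues the argument.

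Granting this, taking $X=A$ shows $AI_B$ is a subgroup properly containing $A$ — otherwise $I_B\le A\cap I=I_A$, whence $B=HI_B\le HI_A=A$, contradicting $A\ne B$ with both maximal. Maximality of $A$ then gives $AI_B=G$, and since $I_B\le B$ this yields $G=AB$, the third assertion.

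Finally, to obtain maximality of $H$ in $A$ I would define the order-preserving map $\phi\colon [H,A]\to[B,G]$ by $\phi(X)=XI_B$, noting $\phi(H)=B$ and $\phi(A)=G$. Its injectivity would follow by producing the left inverse $Y\mapsto A\cap Y$: by the modular law $A\cap XI_B = X(A\cap I_B)=X$, since $A\cap I_B = I_A\cap I_B = H\cap I\le X$. As $B$ is maximal in $G$, the interval $[B,G]$ has exactly two elements, so $\phi$ injective forces $[H,A]$ to have at most two elements, i.e.\ $H$ is maximal in $A$; interchanging the roles of $A$ and $B$ gives maximality of $H$ in $B$ and completes the proof.
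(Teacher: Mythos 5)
Your proof is correct. It reaches the same diamond configuration as the paper's argument, but by a somewhat different route: the paper first proves a transfer lemma showing that $W\mapsto W\cap I$ is an injective, index-preserving map from $[H,G]$ onto a sublattice of $[H\cap I,I]$ closed under meets and joins, and then runs the maximality argument entirely inside the subgroup lattice of $I$, where $(*)$ makes every product of subgroups a subgroup. You instead stay inside $G$ and verify directly that $XI_B$ is a subgroup for $H\le X\le A$, via the permutability chain $I_X(I_BH)=I_X(HI_B)=(I_XH)I_B=(HI_X)I_B$; your injection $X\mapsto XI_B$ of $[H,A]$ into $[B,G]$, with left inverse $Y\mapsto A\cap Y$ supplied by the Dedekind identity, plays exactly the role of the paper's observation that $[BJ:B]=[J:A\cap B]$ places $BJ$ strictly between $B$ and $G$. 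The trade-off is minor: the paper's transfer lemma isolates the lattice-theoretic content once and for all (and is reused in its $G$-set reformulation), while your direct computation avoids introducing the auxiliary lattice $\cI$ at the cost of checking permutability by hand; you are also right to flag that one cannot simply transport modularity from $I$ to $[H,G]$ without the permutability of $H$ with $I_X$ and $I_B$, which is precisely what the paper's identity $W=H(W\cap I)$ encodes.
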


We begin with a general lemma on subgroups of groups of this form.

\begin{lemma}\label{tran}
Let $H$ and $I$ be subgroups of a group $G$ such that $G=HI$.
Then $W\mapsto W\cap I$ is an
injective map from the set of groups between $G$ and $H$ to the set
of groups between $I$ and $I\cap H$.  This map preserves indices
between pairs of groups, and its image is closed under intersections
and joins (where the join of two groups is the group they generate).
\end{lemma}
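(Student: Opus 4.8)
The plan is to base everything on one identity: for any subgroup $W$ with $H\le W\le G$, the hypothesis $G=HI$ forces $W=H(W\cap I)$. The inclusion $H(W\cap I)\subseteq W$ is clear, and conversely, writing an arbitrary $w\in W$ as $w=hi$ with $h\in H$ and $i\in I$ (possible since $G=HI$), the element $i=h^{-1}w$ lies in $W\cap I$ because $h\in H\le W$, so $w\in H(W\cap I)$. Two consequences are immediate. First, the map $W\mapsto W\cap I$ is injective, since $W$ is recovered from $W\cap I$ through the identity. Second, its inverse on the image is $V\mapsto HV$: if $V=W\cap I$ then $HV=W$, so every image group satisfies $HV=VH$ (both equal $W$, whence also $HV=V^{-1}H^{-1}=VH$); one checks conversely that $(HV)\cap I=V$ whenever $HV$ is a subgroup and $I\cap H\le V\le I$. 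Thus the image is exactly $\{V : I\cap H\le V\le I,\ HV\text{ is a subgroup}\}$, and this description drives the closure statements.

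For index preservation I would take groups $H\le W_1\le W_2\le G$ and exhibit a bijection between the right cosets of $W_1$ in $W_2$ and those of $W_1\cap I$ in $W_2\cap I$. The map $j\mapsto W_1 j$ from $W_2\cap I$ to $W_1\backslash W_2$ is surjective, since any $w\in W_2$ writes as $w=hj$ with $h\in H\le W_1$ and $j\in W_2\cap I$ (using $W_2=H(W_2\cap I)$), so $W_1 w=W_1 j$. Moreover, for $j,j'\in I$ one has $W_1 j=W_1 j'$ iff $j'j^{-1}\in W_1\cap I$, i.e.\ iff $(W_1\cap I)j=(W_1\cap I)j'$. Hence the map descends to a bijection and $[W_2:W_1]=[W_2\cap I:W_1\cap I]$.

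Closure under intersection is immediate from $(W_1\cap W_2)\cap I=(W_1\cap I)\cap(W_2\cap I)$, together with the fact that $W_1\cap W_2$ again lies between $H$ and $G$. Closure under joins is the substantive point, and the step I expect to be the main obstacle. Writing $V_i=W_i\cap I$ and $N=\langle V_1,V_2\rangle$, and setting $W=\langle W_1,W_2\rangle$ (a group between $H$ and $G$, whose image $W\cap I$ I must identify with $N$), the inclusion $N\subseteq W\cap I$ is clear. For the reverse inclusion I would take $x\in W\cap I$, express it as a word $u_1\cdots u_k$ with each $u_j\in W_1\cup W_2$, and use $W_i=V_iH=HV_i$ to write each $u_j=t_j h_j$ with $t_j\in V_{c_j}\subseteq N$ and $h_j\in H$. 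The permutability $HV_c=V_cH$ then lets me push the $H$-factors rightward past the $V$-factors one at a time, each step replacing a factor by another element of the \emph{same} $V_c\subseteq N$; this yields a normal form $x=nh$ with $n\in N$ and $h\in H$. Finally $h=n^{-1}x\in I$ forces $h\in H\cap I\le V_1\subseteq N$, so $x=nh\in N$, giving $W\cap I=N=\langle V_1,V_2\rangle$. The delicate point is that permutability of $H$ with each individual $V_i$ does not by itself yield permutability with their join; what makes the argument go through is that each pushing step stays within a single $V_c$, so only the hypotheses $HV_c=V_cH$ for $c\in\{1,2\}$ are ever invoked, and I would write this out carefully rather than appeal to any false transitivity of permutability.
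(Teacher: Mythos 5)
Your proposal is correct and follows essentially the same route as the paper's proof: the key identity $W=H(W\cap I)$, the coset bijection for index preservation, and the identification of the join via $H\langle C,D\rangle=\langle C,D\rangle H$. The only difference is one of detail --- you write out the word-pushing argument establishing that $H$ permutes with $\langle W_1\cap I,\,W_2\cap I\rangle$, a step the paper asserts without proof.
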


\begin{proof}
Let $W$ be a group between $H$ and $G$, so $W$ is a union of cosets
of $H$, and each such coset contains an element of $I$, whence
$W=H(W\cap I)$.  Thus $[W:H]=[W\cap I:H\cap I]$.  If $A,B$ are
groups between $H$ and $G$, write $A=HC$ and $B=HD$ with
$C,D\le I$.  Then $A\cap B=HE$ with 
\[
E=I\cap (A\cap B) = (I\cap A)\cap(I\cap B)=C\cap D.
\]
Also $H\langle C,D\rangle = \langle C,D\rangle H$,
 and hence equals $\langle HC,HD\rangle$.  
\end{proof}

In view of Lemma~\ref{tran},  Proposition~\ref{abelian} is a
consequence of the following result:

\begin{lemma}
Let $I$ be a finite group satisfying $(*)$, let $H$ be a subgroup of
$I$, and let $\cI$ be a set of groups between $H$ and $I$ which
includes both $H$ and $I$, and which is closed under intersections
and joins.  If $A,B$ are maximal subgroups of $I$ with $A\cap B=H$,
then $H$ is maximal in both $A$ and $B$, and $I=AB$.
\end{lemma}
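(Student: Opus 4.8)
The plan is to run everything off property $(*)$ together with the elementary observation (recorded just after the statement of Proposition~\ref{abelianintro}) that permutability of $X,Y\le I$ is equivalent to $XY$ being a group, in which case $\langle X,Y\rangle=XY$ and $\#\langle X,Y\rangle=\#X\cdot\#Y/\#(X\cap Y)$. Since $I$ satisfies $(*)$, \emph{every} join of two members of $\mathcal I$ is simultaneously a genuine subgroup-product and an element of $\mathcal I$ (by closure under joins), so I may freely compute its order by the product formula. Throughout, maximality is read inside the lattice $\mathcal I$: that is, $A,B$ are maximal among the proper members of $\mathcal I$, and ``$H$ maximal in $A$'' means no member of $\mathcal I$ lies strictly between $H$ and $A$. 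This is exactly the form produced by the reduction via Lemma~\ref{tran}, and it is what makes the hypotheses on $\mathcal I$ do real work.

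First I would establish $I=AB$. As $A\ne B$ are both maximal proper members of $\mathcal I$, neither contains the other, so $B\not\le A$. Hence $A\vee B\in\mathcal I$ strictly contains $A$, and maximality of $A$ forces $A\vee B=I$. By $(*)$ the join is the set-product, so $I=AB$ and therefore $\#I=\#A\cdot\#B/\#H$.

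Next I would show that $H$ is maximal in $A$; the argument for $B$ is the mirror image. Suppose for contradiction that some $W\in\mathcal I$ satisfies $H<W<A$. I first check $W\cap B=H$: since $H=A\cap B\le B$ and $H\le W$ we get $H\le W\cap B$, while $W\le A$ gives $W\cap B\le A\cap B=H$. In particular $W\not\le B$ (otherwise $W\le A\cap B=H$), so $W\vee B\in\mathcal I$ strictly contains $B$, and maximality of $B$ forces $W\vee B=I$. Invoking $(*)$ once more, $I=WB$ has order $\#W\cdot\#B/\#(W\cap B)=\#W\cdot\#B/\#H$. Comparing with $\#I=\#A\cdot\#B/\#H$ yields $\#W=\#A$, contradicting $W<A$. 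Thus no such $W$ exists, $H$ is maximal in $A$, and the proof is complete.

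I do not expect a serious obstacle here, since the core is a double application of the product formula; the only point demanding care is the bookkeeping between the two levels, the sublattice $\mathcal I$ and the ambient group $I$. One must take the joins $A\vee B$ and $W\vee B$ inside $\mathcal I$ (so that closure under joins keeps them available as members on which the maximality of $A$ and $B$ can be invoked), while evaluating their sizes as honest subgroup-products in $I$, and it is precisely $(*)$ that reconciles these two readings. Conceptually the counting is a hands-on instance of the diamond isomorphism $[H,A]\cong[B,I]$ that permutability forces on the modular subgroup lattice: the interval $[B,I]$ has length one because $B$ is maximal, so $[H,A]$ must too.
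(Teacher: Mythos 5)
Your proof is correct and follows essentially the same route as the paper's: both establish $I=AB$ from maximality plus permutability, and both use the product/index formula $[WB:B]=[W:W\cap B]$ (after checking $W\cap B=H$) to rule out any $W\in\mathcal{I}$ strictly between $H$ and $A$. The only cosmetic difference is that the paper exhibits $BW$ as a member of $\mathcal{I}$ strictly between $B$ and $I$, contradicting maximality of $B$ directly, whereas you invoke that maximality first and then derive the numerical contradiction $\#W=\#A$.
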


\begin{proof}
Let $A,B$ be distinct
maximal elements of $\cI\setminus\{I\}$.  Then $I=AB$, so
$[I:B]=[A:A\cap B]$.  Also if $J\in\cI$ is strictly between
$A\cap B$ and $A$, then $[BJ:B]=[J:A\cap B]$ so $BJ$
is strictly between $B$ and $I$, contradicting maximality.
Therefore both chains $A\cap B < A < I$ and
$A\cap B < B < I$ are maximal in $\cI$.
\end{proof}

This completes the proof of Proposition~\ref{abelian}, and so of
Proposition~\ref{abelianintro}.

\begin{remark}
M\"uller's proof of Proposition~\ref{abelian} in the case of
abelian groups $I$ is substantially more complicated than the
one given above.
\end{remark}


\section{$G$-sets}
\label{Gsets}

In this section we record some notation and terminology involving
$G$-sets.  For standard notions of $G$-sets, we refer to \cite{NST},
which we follow when possible; below we give the details of some
notions that are not universally used.

Given a group $G$, by a $G$-set we mean a nonempty set $\Omega$
together with a homomorphism $\rho:G\to\Sym(\Omega)$.  For $g\in G$
and $\omega\in\Omega$, we write $\omega^g$ for $(\rho(g))(\omega)$.
An equivalence relation $\phi$ on the $G$-set $\Omega$ is said to be
$G$-invariant if
\[
\alpha\equiv\beta\pmod\phi\Rightarrow \alpha^g\equiv\beta^g
\pmod\phi\quad\text{ for every $g\in G$.}
\]
Such an equivalence relation is called a \emph{congruence} on
$\Omega$.  If $\phi$ is a congruence on $\Omega$, then the action
of $G$ on $\Omega$ naturally induces an action of $G$ on the set
$\Omega/\phi$ of $\phi$-equivalence classes on $\Omega$; the $G$-set
$\Omega/\phi$ is called the quotient of the $G$-set $\Omega$ by
the congruence $\phi$.  The notions of transitive $G$-sets,
homomorphisms of $G$-sets, isomorphisms of $G$-sets, and direct
products of $G$-sets are defined in the usual manner.

Let $\phi$ and $\psi$ be two congruences on a $G$-set $\Omega$.
We say that $\phi$ coarsens $\psi$ (or equivalently, $\psi$ refines
$\phi$) if every $\phi$-equivalence class is a union of
$\psi$-equivalence classes.  We denote the coarsest common
refinement of $\phi$ and $\psi$ by $\phi\meet \psi$; thus, each
$\phi\meet \psi$-equivalence class is the intersection of a
$\phi$-equivalence class and a $\psi$-equivalence class.  We denote
the finest common coarsening of $\phi$ and $\psi$ by
$\phi\join \psi$; each $\phi\join \psi$-equivalence class is a union
of $\phi$-equivalence classes, and also a union of
$\psi$-equivalence classes.
We write $\phi^\psi$ for the coarsening of $\phi$ in which two
$\phi$-equivalence classes are $\phi^\psi$-equivalent if they
nontrivially intersect the same $\psi$-equivalence classes.
We emphasize that $\phi\meet\psi$, $\phi\join\psi$, and
$\phi^\psi$ are congruences on $\Omega$.

Any $G$-set $\Omega$ comes equipped with two trivial congruences:
the \emph{trivial coarse congruence}, in which $\Omega$ is itself an
equivalence class; and the \emph{trivial fine congruence}, in which
every equivalence class contains a single element.  Any congruence
besides these two is said to be \emph{nontrivial}.
We usually identify $\Omega$ with its trivial coarse congruence.
We say a transitive $G$-set $\Omega$ is \emph{primitive} if it
admits no nontrivial congruences.

Let $\phi$ and $\psi$ be congruences on a transitive $G$-set
$\Omega$.  Then $\Omega/\phi$ and $\Omega/\psi$ are transitive,
so any two $\psi$-equivalence classes have the same size, and
likewise for any two $\phi$-equivalence classes.
If $\phi$ coarsens $\psi$, then every $\phi$-equivalence class
consists of the same number $k$ of $\psi$-equivalence classes;
we call $k$ the \emph{index} of $\psi$ in $\phi$, and denote this
by $[\phi:\psi]$.

The following lemma is routine.

\begin{lemma}\label{gpset}
Let $\Omega$ be a transitive $G$-set, and pick
$\omega\in\Omega$.  Define a map $\theta$ from the set of
congruences on $\Omega$ to the set of groups between $G$ and
$G_\omega$ as follows: let $\theta(\phi)$ be the stabilizer of
the image of $\omega$ in the $G$-set $\Omega/\phi$.  Then
\begin{itemize}
\item $\theta$ is a bijection;
\item $\phi$ coarsens $\psi$ if and only if $\theta(\psi)$ is
 a subgroup of $\theta(\phi)$; and
\item if $\phi$ coarsens $\psi$ then
 $[\phi:\psi]=[\theta(\phi):\theta(\psi)]$.
\end{itemize}
\end{lemma}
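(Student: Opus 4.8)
The plan is to exhibit an explicit inverse to $\theta$ and to track everything through the single block $[\omega]_\phi$, by which I mean the $\phi$-equivalence class of $\omega$. First I would reformulate $\theta$ concretely: the image of $\omega$ in $\Omega/\phi$ is $[\omega]_\phi$, so $\theta(\phi)=\{g\in G:\omega^g\equiv\omega\pmod\phi\}$, and this orbit description gives $[\omega]_\phi=\omega^{\theta(\phi)}$. In particular $\theta(\phi)$ is a subgroup containing $G_\omega$, so $\theta$ really does land in the set of groups between $G$ and $G_\omega$. The one structural observation driving the whole argument is that, on a transitive $G$-set, a congruence is determined by the class $[\omega]_\phi$ alone: by $G$-invariance every class has the form $[\omega]_\phi^g=[\omega^g]_\phi$, and by transitivity every point is some $\omega^g$, so the translates of $[\omega]_\phi$ already exhaust the classes.

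For the bijection I would construct the inverse $W\mapsto\phi_W$. Given $W$ with $G_\omega\le W\le G$, set $B=\omega^W$ and let $\phi_W$ be the relation whose classes are the translates $B^g$, $g\in G$. The hypothesis $G_\omega\le W$ makes $B$ a block of $\Omega$ (if $B^g$ meets $B$, then $\omega^{w_1g}=\omega^{w_2}$ forces $w_1gw_2^{-1}\in G_\omega\le W$, so $g\in W$, whence $B^g=B$), so its translates genuinely partition $\Omega$ and $\phi_W$ is a congruence. The same idea gives $\theta(\phi_W)=W$: an element $g$ stabilizes $B$ iff $\omega^g\in B=\omega^W$, i.e.\ iff $g\in G_\omega W=W$. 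Conversely, starting from a congruence $\phi$ and putting $W=\theta(\phi)$, the identity $[\omega]_\phi=\omega^{\theta(\phi)}$ yields $[\omega]_\phi=B$, so $\phi$ and $\phi_W$ share the class of $\omega$ and hence coincide by the structural observation. This proves $\theta$ is a bijection.

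For the remaining two items I would argue through the block as well. Since coarsening is a translate-by-translate condition, $\phi$ coarsens $\psi$ iff $[\omega]_\psi\subseteq[\omega]_\phi$, which by $[\omega]_\phi=\omega^{\theta(\phi)}$ (together with $G_\omega\le\theta(\phi)$) is equivalent to $\theta(\psi)\le\theta(\phi)$. Finally, assuming $\phi$ coarsens $\psi$, I would count the $\psi$-classes inside $[\omega]_\phi=\omega^{\theta(\phi)}$: these are exactly the classes $[\omega^g]_\psi$ with $g\in\theta(\phi)$, and $[\omega^g]_\psi=[\omega^{g'}]_\psi$ iff $g(g')^{-1}\in\theta(\psi)$, so their number is the number of right cosets of $\theta(\psi)$ in $\theta(\phi)$, namely $[\theta(\phi):\theta(\psi)]$; this equals $[\phi:\psi]$ by definition. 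None of these steps is a genuine obstacle — the lemma is essentially bookkeeping — and the only points demanding care are the left/right coset conventions and the repeated use of the hypothesis $G_\omega\le W$, which is what makes $\omega^W$ a block and what lets one recover $W$ as the stabilizer of that block.
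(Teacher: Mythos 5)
Your proof is correct and complete; it is the standard block--stabilizer correspondence, which the paper simply declares ``routine'' and does not prove. All the delicate points (using $G_\omega\le W$ to show $\omega^W$ is a block, recovering a congruence from the single class $[\omega]_\phi$ via transitivity, and the right-coset count for the index) are handled properly.
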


Note that the final two assertions say that $\theta$ is
order-preserving and index-preserving.

Finally, we remark that the proof in the previous section can
be translated to the languate of $G$-sets.  Specifically,
let $I$ be a finite quasi-Hamiltonian subgroup of a group $G$,
and let $A$ and $B$ be distinct maximal subgroups of $G$ for which
$H:=A\cap B$ satisfies $G=HI$.  Let $\Omega$ be the $G$-set of
left cosets of $H$ in $G$, and let $\phi$ and $\psi$ be the
congruences on $\Omega$ corresponding (via Lemma~\ref{gpset}) to
$A$ and $B$.  Then, for $I_1,I_2\le I$, the property
$I_1I_2=I_2I_1$ can be restated as the condition that
$\phi\join\psi$ should be the full coset space $I/I_1I_2$.
Thus, the proof in the previous section can be stated either in
terms of groups or congruences, with little conceptual difference.
On the other hand, in the next section we will find the congruence
viewpoint to be more suitable for the problem at hand.


\section{Transitive groups with an element having at most two cycles}
\label{proofsec}

In this section we prove Theorem~\ref{diamond}; by Lemma~\ref{dl},
this implies Theorem~\ref{mainthm}.

Given a group $G$ and a finite-index subgroup $H$, let $\Omega$ be
the transitive $G$-set of left cosets of $H$ in $G$.  Suppose
$g\in G$ has at most two orbits (= cycles) on $\Omega$.
Let $\phi$ and $\psi$ be nontrivial congruences on $\Omega$.
Suppose further that $\phi$ and $\psi$ have no nontrivial common
refinement, so any $\phi$-equivalence class intersects any 
$\psi$-equivalence class in at most one element -- thus $\Omega$
embeds (as a $G$-set) into $\Omega/\phi \times \Omega/\psi$.
Suppose also that $\phi$ and $\psi$ have no nontrivial common
coarsening.  \emph{We maintain the above notation throughout
this section.}

By Lemmas~\ref{dl} and \ref{gpset}, to prove
Theorem~\ref{diamond} it suffices to show:
if $\Omega/\phi$ and
$\Omega/\psi$ are primitive, then either $G/\Core_G(H)$ is dihedral
or $\Omega = \Omega/\phi\times\Omega/\psi$ where both $\phi$ and
$\psi$ are maximally fine nontrivial congruences.  It will be
convenient to do some arguments without assuming primitivity.

We begin by observing how $g$-cyles on $\Omega$ relate to
$g$-cycles on $\Omega/\phi$ and $\Omega/\psi$.

\begin{lemma}\label{0}
Pick $\omega\in\Omega$, and suppose the image $\omega_\phi$ of
$\omega$ in $\Omega/\phi$ lies in a $g$-cycle of length $a$, and
the image $\omega_\psi$ of $\omega$ in $\Omega/\psi$ lies in a
$g$-cycle of length $b$.  Then $\omega$
lies in a $g$-cycle of length $\lcm(a,b)$.  
The $g^{\gcd(a,b)}$-orbit of $\omega$ is a union of
$\phi$-equivalence classes, and also is a union of
$\psi$-equivalence classes.  The congruence $\phi^\psi$
nontrivially coarsens $\phi$ unless either $a=b$ or
$\gcd(a,b)=1$.
\end{lemma}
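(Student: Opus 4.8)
The plan is to route everything through the $G$-equivariant injection $\iota\colon\Omega\hookrightarrow\Omega/\phi\times\Omega/\psi$ given by $\omega\mapsto(\omega_\phi,\omega_\psi)$, which is injective precisely because $\phi$ and $\psi$ have no nontrivial common refinement. Since $\iota$ is equivariant, $g$ acts coordinatewise, so the $g$-action on $\Omega$ is entirely encoded by the two $g$-cycles through $\omega_\phi$ and $\omega_\psi$, of lengths $a$ and $b$. This turns the lemma into bookkeeping about residues modulo $a$ and $b$.

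First I would dispatch the two easy assertions. The length of the $g$-cycle of $\omega$ is the least $n>0$ with $\omega^{g^n}=\omega$; by injectivity of $\iota$ this is the least $n$ with both $\omega_\phi^{g^n}=\omega_\phi$ and $\omega_\psi^{g^n}=\omega_\psi$, namely $\lcm(a,b)$. For the second assertion, write $C$ for this cycle and index its points as $\omega^{g^i}$ with $0\le i<\lcm(a,b)$; then $\omega^{g^i}\equiv\omega^{g^j}\pmod\phi$ iff $i\equiv j\pmod a$, and similarly mod $b$ for $\psi$, while the $g^{\gcd(a,b)}$-orbit of $\omega$ is $\{\omega^{g^i}:\gcd(a,b)\mid i\}$. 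Since $\gcd(a,b)$ divides both $a$ and $b$, the index set $\{i:\gcd(a,b)\mid i\}$ is a union of residue classes mod $a$ and a union of residue classes mod $b$, so (within $C$) this orbit is a union of $\phi$-classes and of $\psi$-classes. I expect both of these to be routine.

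The substantive point is the third assertion. For a $\phi$-class $X$, let $S(X)\subseteq\Omega/\psi$ be the set of $\psi$-classes meeting $X$; since $g$ is a congruence automorphism, $S(X^{g^k})=S(X)^{g^k}$. Applying $g^a$, which fixes $\omega_\phi$, shows that $S([\omega]_\phi)$ is invariant under $g^a$ acting on the length-$b$ cycle of $\omega_\psi$. The key arithmetic input is that, writing $d=\gcd(a,b)$, one has $\gcd(a/d,b/d)=1$, so $g^a=(g^d)^{a/d}$ generates the same cyclic group as $g^d$ on that cycle; hence $S([\omega]_\phi)$ is in fact $g^d$-invariant. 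It follows that $[\omega]_\phi$ and $[\omega]_\phi^{\,g^d}=[\omega^{g^d}]_\phi$ meet exactly the same $\psi$-classes and are therefore identified by $\phi^\psi$. When $a\nmid b$ we have $d<a$, so these two $\phi$-classes are distinct, and $\phi^\psi$ properly coarsens $\phi$, as desired.

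The hard part will be the complementary case $a\mid b$ (so $d=a$), where $[\omega^{g^d}]_\phi=[\omega]_\phi$ and the argument above yields no new identification. I would first observe that this situation is impossible when $\langle g\rangle$ is transitive on $\Omega$: a single $g$-cycle forces $\phi$ and $\psi$ to be the ``mod $m_1$'' and ``mod $m_2$'' congruences of a cyclic $\langle g\rangle$-set, whence the absence of a common coarsening gives $\gcd(a,b)=1$, contradicting $d>1$. Thus, under the standing hypothesis that $g$ has at most two cycles, I am reduced to $g$ having exactly two cycles, and here I would use the second cycle together with the no-common-coarsening hypothesis to show that each $\phi$-class meets \emph{every} $\psi$-class, so that $\phi^\psi$ collapses completely; the delicate step is verifying that the second $g$-cycle contributes precisely the $\psi$-classes absent from $S([\omega]_\phi)$. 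I expect this cross-cycle coordination to be the main obstacle.
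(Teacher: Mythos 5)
Your treatment of the first two assertions matches the paper's proof (compute cycle lengths via the embedding of $\Omega$ into $\Omega/\phi\times\Omega/\psi$, then do residue bookkeeping mod $a$ and $b$). The problems are in the third assertion. Your positive argument --- $g^a$-invariance of $S([\omega]_\phi)$ upgraded to $g^{\gcd(a,b)}$-invariance --- only shows that certain distinct $\phi$-classes become identified, i.e.\ that $\phi^\psi\ne\phi$ when $a\nmid b$. But ``nontrivially coarsens'' also requires $\phi^\psi$ to be a \emph{nontrivial} congruence, i.e.\ not the trivial coarse one, and that is precisely what is used downstream (in Case 4a a nontrivial $\phi^\psi$ strictly coarser than $\phi$ contradicts primitivity of $\Omega/\phi$). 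Your soft equivariance argument gives only one direction of the needed equivalence. The paper instead computes exactly which $\psi$-classes the $\phi$-class of $\omega^{g^i}$ meets, namely those of $\omega_\psi^{g^{i+\gcd(a,b)s}}$, obtaining the biconditional ``identified in $\phi^\psi$ if and only if $i\equiv j\pmod{\gcd(a,b)}$,'' from which both $\phi^\psi\ne\phi$ and $\phi^\psi\ne\Omega$ follow when $1<\gcd(a,b)<a$. You need to supply this second half.

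The ``hard part'' you set aside is not a gap you can close. When $a\mid b$ with $a\ne b$, the same computation gives $\phi^\psi=\phi$ (the paper's own proof states ``$\phi^\psi=\phi$ if $a\mid b$''), so the conclusion as literally written fails there; the statement's ``unless $a=b$'' should be read as ``unless $a\mid b$,'' which is the form actually invoked in Case 4a. Worse, your proposed strategy for that case --- showing every $\phi$-class meets every $\psi$-class so that $\phi^\psi$ ``collapses completely'' --- would make $\phi^\psi$ the trivial coarse congruence, which is \emph{not} a nontrivial coarsening of $\phi$; even if that cross-cycle argument succeeded, it would refute rather than establish the stated conclusion. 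That branch of your plan should be deleted, not completed.
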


\begin{proof}
Plainly $g^r$ fixes $\omega_\phi$ precisely when $a\mid r$,
and $g^r$ fixed $\omega_\psi$ precisely when $b\mid r$.
Since $g^r$ fixes $\omega$ if and only if $g^r$ fixes both
$\omega_\phi$ and $\omega_\psi$, it follows that $\omega$ is in
a $g$-cycle of length $\lcm(a,b)$.  The $\phi$-equivalence
classes on $\Omega/\phi$ are precisely the $g^a$-cycles;
the $\psi$-equivalence classes are likewise the $g^b$-cycles.
Hence the $g^{\gcd(a,b)}$-orbit of $\omega$ is a union of
$\phi$-equivalence classes, and also a union of
$\psi$-equivalence classes.  Finally, the $\phi$-equivalence
class $\langle g^a\rangle g^i\omega_\phi$ nontrivially intersects
just the $\psi$-equivalence classes
$\langle g^b\rangle g^{ar+i}\omega_\psi$, or in other words the
classes containing elements of the form
$g^{\gcd(a,b)s+i}\omega_\psi$.  Thus, two $\phi$-classes
$\langle g^a\rangle g^{i\omega_\phi}$ and
$\langle g^a\rangle g^{j\omega_\phi}$ become equivalent in
$\phi^\psi$ if and only if $i\equiv j\pmod{\gcd(a,b)}$.
Here $\phi^\psi$ is the trivial coarse partition if
$\gcd(a,b)=1$, and $\phi^\psi=\phi$ if $a\mid b$; in all
other situations, $\phi^\psi$ is a nontrivial coarsening
of $\phi$.
\end{proof}

We split the proof of Theorem~\ref{diamond} into several cases.
In the first case we illustrate our method by using it to
prove Ritt's result (which is needed to verify the `closure'
hypothesis in Lemma~\ref{dl}).

\begin{case1} If $g$ has a single cycle on $\Omega$, and
both $\Omega/\phi$ and $\Omega/\psi$ are primitive, then
$\Omega=\Omega/\phi\times\Omega/\psi$ and both $\phi$
and $\psi$ are maximally fine nontrivial congruences.
\end{case1}

\begin{proof}
Let $a$ and $b$ be the lengths of the cycles of $g$ on
$\Omega/\phi$ and $\Omega/\psi$, respectively.  Since
$\phi$ and $\psi$ have no nontrivial common coarsening,
$\phi\join \psi$ is trivial, so Lemma~\ref{0} implies
$\gcd(a,b)=1$.  Thus $\Omega=\Omega/\phi\times \Omega/\psi$.
If the congruence $\mu$ nontrivially refines $\psi$, then
$\Omega\ne\Omega/\phi\times\Omega/\mu$, so $\mu$ must not
satisfy the same hypotheses as $\psi$; hence
$\phi$ and $\mu$ have a nontrivial common coarsening.
This is not possible if $\Omega/\phi$ is primitive.
\end{proof}

\begin{case2} If $g$ has two cycles on both $\Omega/\phi$
and $\Omega/\psi$, then $\phi\join\psi$ is nontrivial.
\end{case2}

\begin{proof}
Let $\phi'$ be the equivalence relation on $\Omega$ whose two
equivalence classes are the unions of the $\phi$-equivalence
classes comprising the two cycles of $g$ on $\Omega/\phi$.
Then $g$ acts trivially on $\Omega/\phi'$.  Since $g$ has two
cycles on $\Omega$, and each cycle is contained in a
$\phi'$-equivalence class, the $\phi'$-equivalence classes are
precisely the $g$-cycles on $\Omega$.  Since the same holds for
$\psi'$, it follows that $\phi'=\psi'$ is a nontrivial common
coarsening of $\phi$ and $\psi$, so $\phi\join \psi$ is nontrivial.
\end{proof}

\begin{case3} Suppose the $g$-cycles on $\Omega/\phi$ have
lengths $a_1$ and $a_2$, and $\Omega/\psi$ is a $g$-cycle of
length $b$.  Then $\Omega=\Omega/\phi\times\Omega/\psi$, and if
$\Omega/\phi$ and $\Omega/\psi$ are primitive then both
$\phi$ and $\psi$ are maximally fine nontrivial congruences.
congruence.
\end{case3}

\begin{proof}
By Lemma~\ref{0}, the $g$-cycles on $\Omega$ have lengths
$\lcm(a_1,b)$ and $\lcm(a_2,b)$, and consist of $a_1$ and $a_2$
$\phi$-equivalence classes; since $G$ is transitive on $\Omega/\phi$,
every such equivalence class has the same size, so
$\lcm(a_1,b)/a_1=\lcm(a_2,b)/a_2$ and thus $\gcd(a_1,b)=\gcd(a_2,b)$.
Since $\phi\join\psi$ is nontrivial, Lemma~\ref{0} implies
$\gcd(a_1,b)=1$, so $\Omega=\Omega/\phi\times\Omega/\psi$.

Let $\mu$ be a nontrivial congruence refining $\psi$; if
$\phi\join\mu$ is nontrivial, then primitivity of $\Omega/\phi$
implies $\phi\join\mu=\phi$, so $\mu$ is a nontrivial common
refinement of $\phi$ and $\psi$, contradiction.  Thus
$\phi\join\mu$ is trivial, so from Case~2 we know that $g$ acts
cyclically on $\Omega/\mu$.  Then the previous paragraph
implies $\Omega=\Omega/\phi\times\Omega/\mu$, so $\mu=\psi$.
The same argument shows that $\phi$ is also a maximally fine
nontrivial congruence.
\end{proof}

\begin{case4}
Suppose $g$ is an $a$-cycle on $\Omega/\phi$ and a $b$-cycle on
$\Omega/\psi$, but $g$ has two cycles on $\Omega$.  Then either
$\gcd(a,b)>2$, or both
$\gcd(a,b)=2$ and $\Omega=\Omega/\phi\times\Omega/\psi$.
\end{case4}

\begin{proof}
By Lemma~\ref{0}, each $g$-cycle on $\Omega$ has length
$\lcm(a,b)$.  Since $\Omega$ embeds in the $G$-set
$\Omega/\phi\times\Omega/\psi$, which has cardinality $ab$,
it follows that $\gcd(a,b)\ge 2$, with equality if and only if
$\Omega=\Omega/\phi\times\Omega/\psi$.
\end{proof}

Henceforth suppose the situation of Case 4 holds, and suppose
also that $\Omega/\phi$ and $\Omega/\psi$ are primitive.

\begin{case4a}
If $\gcd(a,b)>2$
then $G/\Core_G(H)$ is dihedral of order twice a prime, and the
trivial fine congruence has index $2$ in both $\phi$ and $\psi$.
\end{case4a}

\begin{proof}
By Lemma~\ref{0}, $\phi^\psi$ is nontrivial unless $a\mid b$,
and likewise $\psi^\phi$ is nontrivial unless $b\mid a$.
Thus, primitivity implies $a=b$.  View
$\Omega$ as the edges of a bipartite graph $\Gamma$ whose vertices are
$\Omega/\phi$ and $\Omega/\psi$. By the previous inference, $\Gamma$ is
2-regular.  The classes of $\phi \join \psi$ correspond to the connected
components of $\Gamma$; so if $\Omega/\psi$ is primitive, Gamma is
connected.  Thus $\Gamma$ is a cycle of length $2a$. Now $G/\Core_G(H)
\subseteq \Aut(\Gamma)$, where here we only consider the automorphisms
that preverse $\Gamma$ as a \emph{bipartite} graph. But $\Aut(\Gamma)
\cong D_a$, and $D_a$ acts freely transitively on $\Omega$.  Because
$G/\Core_G(H)$ acts transitively, it is also isomorphic to $D_a$.  Since
$\Omega/\phi$ is primitive, $a$ is prime.
\end{proof}

\begin{case4b}
If $\gcd(a,b)=2$ and $\Omega=\Omega/\phi\times\Omega/\psi$,
then both $\phi$ and $\psi$ are maximally fine nontrivial
congruences.
\end{case4b}

\begin{proof}
Suppose not, and let $\mu\ne\psi$ be a nontrivial
congruence refining $\psi$.  If $g$ has two cycles on $\Omega/\mu$
then Case 3 provides a contradiction.  Thus, $g$ acts as a $c$-cycle
on $\Omega/\mu$.  Lemma~\ref{0} implies $a\mid c$.  Since
$\mu$ refines $\psi$, we also have $b\mid c$.  Thus $c$ is divisible by
$\lcm(a,b)=ab/2$, and $c$ divides $\#\Omega=ab$.  Since $\mu$ is
nontrivial, we have $c\ne ab$, so $c=ab/2$, and thus each
$\mu$-equivalence class has size $2$.  Finally, $\mu^\phi\join\psi$
has precisely two equivalence classes (and so is nontrivial) unless
$b=2$ and $c=a$.  But if $a=c$ then, as above, $G/\Core_G(H)$ is
dihedral.  However, this cannot happen here, because $a$ is not prime.
\end{proof}

\begin{remark}
We stated the diamond lemma (Lemma~\ref{dl}) in terms of
descending chains of groups (with $A$ and $B$ maximal subgroups
of $G$ which meet in $H$);
there is an analogous result in terms of ascending chains
(with $A$ and $B$ minimal overgroups of $H$ in $G$ which
generate $G$).  However, we do not see how to do the proof
this way, since we do not know how to make a congruence that
behaves with regard to refining in the same way that
$\phi^\psi$ behaves for coarsening.
\end{remark}


\section{Data and speculations}

Theorem~\ref{mainthm} is not true if we allow $I$ to be an abelian
group with two orbits (on the $G$-set $\Omega$ of left cosets of $H$
in $G$).  For instance, let $G$ be the group
of permutations of $\F_9$ of the form $x\mapsto\alpha x+\beta$
with $\beta\in\F_9$ and $\alpha^4=1$.  Let $A$ and $B$ be the subgroups
$\{\pm x+\beta:\beta\in\F_9\}$ and $\{x\mapsto\alpha x:
\alpha^4=1\}$.  Then two maximal chains of groups between
$H:=\{\pm x\}$ and $G$ are $H<B<G$ and $H<\{\pm x+\beta:\beta\in\F_3\}<A<G$,
which have different lengths.  However, the abelian group
$\{x+\beta:\beta\in\F_9\}$ has two orbits on $\Omega$.

There are also counterexamples to Theorem~\ref{mainthm} if we allow
$I$ to be a cyclic group with three orbits on $\Omega$.
For instance, let $H$ be an order-$2$ subgroup of $S_4$ which is not
contained in $A_4$; then any cyclic subgroup of order $4$ has three
cycles on $\Omega$, but two maximal chains of groups between $H$ and $G$
are $H<S_3<S_4$ and $H<V_4<D_8<S_4$.

However, computer searches on small groups suggest there are extremely
few examples in these situations.  In fact, it may be that there
are only finitely many finite groups $G$ containing a core-free subgruop
$H$ such that $(G,H)$ does not have the Jordan property, but $G$ contains
an element having precisely three cycles on the coset space $\Omega$.
As yet we have not been able to extend the methods of this paper to
analyze such a situation.



\end{document}